\newcommand\R{\mathbb R}
\newcommand\N{\mathbb N}
\newcommand\G{\mathbb G}
\newcommand\g{\mathfrak{g}}
\newcommand\X{\mathrm X}
\newcommand\Y{\mathrm Y}
\newcommand\D{\mathcal D}
\newcommand\MCP{\mathrm {MCP}}
\newcommand\GEO{\mathrm {GEO}}
\newcommand\CE{\mathrm {CE}}
\newcommand{\Cut}{\mathcal{C}}
\newcommand\ol{\overline}
\newcommand\w{\widetilde}
\newcommand\rf{\mathrm{f}}
\newcommand\rN{\mathrm{N}}
\newcommand\scr{\mathcal}
\newtheorem{theorem}{Theorem}
\newtheorem{lemma}[theorem]{Lemma}
\newtheorem{remark}[theorem]{Remark}
\newtheorem{corollary}[theorem]{Corollary}
\newtheorem{proposition}[theorem]{Proposition}
\newtheorem{definition}[theorem]{Definition}
\newtheorem{conjecture}[theorem]{Conjecture}
\begin{document}

\title[Lower bound of curvature exponent]{On the lower bound of the curvature exponent on step-two Carnot groups}

\author[Y. Zhang]{Ye Zhang}
\address[Y.~Zhang]{Analysis on Metric Spaces Unit, Okinawa Institute of Science and Technology Graduate University, Okinawa 904-0495, Japan; Scuola Internazionale Superiore di Studi Avanzati (SISSA), Via Bonomea 265, 34136 Trieste, Italy} 
%\curraddr{}
\email{yezhang@sissa.it}
\thanks{This project has received funding from (i)~the European Research Council (ERC) under the European Union's Horizon 2020 research and innovation programme (grant agreement GEOSUB, No. 945655); (ii)~the JSPS Grant-in-Aid for Early-Career Scientists (No. 24K16928).}

\subjclass[2020]{Primary 53C17, 53C23}
\keywords{Carnot groups,  Curvature exponent, Measure contraction property, Sub-Riemannian geometry}

\begin{abstract}
In this work, we show that there exists a step-two Carnot group on which the new lower bound of the curvature exponent given in \cite{NZ24} can be strictly less than the curvature exponent by studying the convergence of the structure constants of Lie algebra.
\end{abstract}

\maketitle

\section{Introduction}

\medskip

\subsection{Measure contraction property}

First we recall that measure contraction property is some weak variant of the curvature-dimension condition, which is introduced in \cite{S06} or \cite{O07}. In the following we assume the underlying  space is a length space $(X,d,\mu)$ with negligible cut loci, i.e. a metric measure space with distance function $d$,  Borel measure $\mu$, and  for every $x \in X$, there exists a negligible set $\Cut (x)$ and a measurable function $\Phi^x: X \setminus \Cut (x) \times [0,1] \to X$, such that the curve $t \mapsto \Phi^x(y,t)$ is the unique length minimizing geodesic joining $x$ and $y$. On such space, we define the set of $s$-intermediate points by
\[
Z_s(x,A) = \{\Phi^x(y,s):  y \in A \setminus \Cut (x)\}, \quad \forall \, s \in [0,1], x \in X, A \ \mbox{measurable}.
\]

\begin{definition}[Measure contraction property]
A length space $(X,d,\mu)$ with negligible cut loci satisfies $\MCP(K,N)$ for some $K \in \R$ and $N > 1$, or $K \le 0$ and $N = 1$, if for every $x \in X$ and measurable $A \subset X$
	with $0<\mu(A)<\infty$ (also $A \subset B(x,\pi \sqrt{(N - 1)/K})$ if $K > 0$), we have
\[
	\mu(Z_s(x,A)) \ge \int_A s \left[ \frac{s_K(s d(x,y)/\sqrt{N - 1})}{s_K(d(x,y)/\sqrt{N - 1})} \right]^{N - 1} d\mu(y).
	\qquad\forall \, s\in[0,1],
	\]
where we adopt the convention that $0/0 = 1$ and the term in the square bracket is $1$ for $N = 1$. Here and in the following $B(x,r)$ is the open ball centered at $x$ with radius $r$, and the function $s_K$ is defined by
\[
s_K(t) := \begin{cases}
(1/\sqrt{K}) \sin(\sqrt{K} t) \quad & \mbox{if $K > 0$}, \\
t \quad & \mbox{if $K = 0$}, \\
(1/\sqrt{-K}) \sinh(\sqrt{-K} t) \quad & \mbox{if $K < 0$}.
\end{cases}
\]
\end{definition}

Here we collect some properties of the measure contraction property. For more details, we refer to \cite{O07}.
\begin{proposition}[Lemma 2.4 and Theorem 4.3 of \cite{O07}]\label{pMCP}
On general length space $(X,d,\mu)$, the following properties hold: 
\begin{enumerate}[(i)]
   \item \label{monoto}
    $\MCP(K,N)$ implies $\MCP(K',N')$ for $K' \le K$ and $N' \ge N$; 
   \item  \label{chasc}
   If $\MCP(K,N)$ holds on $(X,d,\mu)$, then $\MCP(a^{-2}K,N)$ holds on $(X,a \, d, b \, \mu)$ for $a,b > 0$;
   \item \label{GBM}
   (Generalized Bonnet--Myers theorem) If $\MCP(K,N)$ holds with $K > 0$ and $N > 1$, then the underlying space is bounded. 
\end{enumerate}
\end{proposition}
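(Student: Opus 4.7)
These three statements are classical and each follows from the MCP definition by a different routine technique; my plan is to tackle them in the order (ii), (i), (iii), since (ii) is a pure substitution that clarifies the role of $s_K$, while (i) and (iii) exploit monotonicity and sign properties of the model coefficients.

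\textbf{For (ii).} The set $Z_s(x,A)$ depends only on the underlying minimizing geodesics, which do not change when we rescale $d$ by $a > 0$ or scale $\mu$ by $b > 0$. The heart of the argument is the identity $s_{a^{-2}K}(at) = a\,s_K(t)$, an immediate consequence of the explicit formulas in each of the cases $K > 0$, $K = 0$, $K < 0$. Substituting $\tilde d = a d$ on the right-hand side of the MCP inequality, I would observe that the ratio $s_K(sd/\sqrt{N-1})/s_K(d/\sqrt{N-1})$ is preserved verbatim after replacing $K$ by $a^{-2}K$ and $d$ by $\tilde d$. Multiplying both sides by $b$ then yields MCP$(a^{-2}K, N)$ on $(X, a\,d, b\mu)$.

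\textbf{For (i).} I would split the monotonicity into its two directions. Monotonicity in $K$ (for fixed $N$) comes from the Sturm comparison for the ODE $u'' + K u = 0$, which gives that $s_K(st)/s_K(t)$ is non-increasing in $K$ for $s \in [0,1]$. Monotonicity in $N$ (for fixed $K$) is more subtle, since $N$ enters both as an exponent and inside $s_K$ through $d/\sqrt{N-1}$. The cleanest route I see is to reparametrize by $u = d/\sqrt{N-1}$ and differentiate the logarithm of the coefficient $s[s_K(su)/s_K(u)]^{N-1}$ in $N$; the sign of the derivative follows from log-concavity of $s_K$ on its positive range. The main technical point will be organizing these two monotonicities without getting confused by the two different roles of $N$.

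\textbf{For (iii).} I would argue by contradiction. Suppose $K > 0$ and the space contains a pair $x, y$ with $d(x,y) > \pi\sqrt{(N-1)/K}$. Then for $z$ on the minimizing geodesic from $x$ to $y$ with $d(x,z)$ just below $\pi\sqrt{(N-1)/K}$, apply MCP to the base point $x$ and the small ball $A_\epsilon = B(z,\epsilon)$, which lies inside the critical ball. As $d(x,z) \uparrow \pi\sqrt{(N-1)/K}$ (taking $\epsilon$ even smaller), the denominator $s_K(d(x,\cdot)/\sqrt{N-1})$ in the integrand tends to $0^+$ while the numerator stays bounded away from zero for any fixed $s \in (0,1)$. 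Hence the right-hand side of the MCP inequality blows up, while the left-hand side is the measure of a bounded set, a contradiction once $\mu$ is assumed locally finite. The main obstacle here is making this limit quantitative, which effectively amounts to running the Bishop--Gromov comparison that MCP encodes; this is where the careful statement in \cite{O07} earns its keep.
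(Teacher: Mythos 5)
The paper offers no proof of this proposition: it is quoted verbatim from Ohta's work (\cite{O07}, Lemma~2.4 and Theorem~4.3), so there is no internal argument to compare yours against. Judged on its own terms, your sketch of (ii) is correct and essentially complete: the identity $s_{a^{-2}K}(at)=a\,s_K(t)$, the invariance of the geodesics (hence of $Z_s(x,A)$) under rescaling of $d$, and the compatibility of the ball condition $A\subset B(x,\pi\sqrt{(N-1)/K})$ with the rescaling are exactly what is needed.

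For (i), however, you have the $K$-monotonicity backwards. You need the coefficient to \emph{decrease} when $K$ is lowered, i.e.\ $K\mapsto s_K(st)/s_K(t)$ must be non-\emph{decreasing} in $K$ for $s\in[0,1]$; this is what holds (e.g.\ at $t=1$, $s=1/2$: $\sinh(1/2)/\sinh(1)\approx 0.443 < 1/2 < \sin(1/2)/\sin(1)\approx 0.570$), and it is what Sturm comparison gives. With the direction you state (non-increasing in $K$), the implication $\MCP(K,N)\Rightarrow\MCP(K',N)$ for $K'\le K$ would run the wrong way. The $N$-monotonicity is only a plan, and it is the genuinely delicate part of Ohta's Lemma~2.4, since $N$ enters both as the exponent and inside the argument of $s_K$; "log-concavity of $s_K$" is not by itself enough to settle the sign after the substitution $u=d/\sqrt{N-1}$, and this step needs to be carried out. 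For (iii), the contradiction as you set it up does not close: the right-hand side of the MCP inequality is $s\,M^{N-1}\mu(B(z,\epsilon))$ with $M\to\infty$, but $\mu(B(z,\epsilon))$ is not bounded below, so the right-hand side need not blow up. What the inequality actually yields, after bounding $\mu(Z_s(x,A))$ by the measure of a fixed bounded set, is the upper bound $\mu(B(z,\epsilon))\le C\,M^{-(N-1)}$, i.e.\ the measure degenerates near the critical sphere. Converting that into "the space is bounded" requires an additional ingredient --- positivity of $\mu$ on open balls (full support) together with a covering or chaining argument across the annulus --- and this is precisely the content of Ohta's Theorem~4.3 that your sketch defers. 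As written, (ii) stands, while (i) and (iii) each contain a concrete gap.
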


\subsection{Step-two Carnot groups}

Recall that a connected and simply connected Lie group $\G = (\G, \cdot)$ is a step-two Carnot group
if its left-invariant Lie algebra $\mathfrak{g}$ admits a stratification
\begin{align*}
\mathfrak{g} = \mathfrak{g}_1 \oplus \mathfrak{g}_2, \quad
[\mathfrak{g}_1, \mathfrak{g}_1] = \mathfrak{g}_2, \quad
[\mathfrak{g}_1, \mathfrak{g}_2] = \{0\},
\end{align*}
where $[\cdot,\cdot]$ denotes the Lie bracket on $\mathfrak{g}$. Via the Lie group exponential map, we can regard $(\G, \cdot)$ as $(\g, \ast)$ with the group operation $\ast$ on $\g$ given by
\begin{align}\label{mulalg}
a \ast b := a + b + \frac{1}{2} [a,b],  \qquad \forall \, a,b \in \g.
\end{align}
Note that on $(\g,\ast)$ the identity is $0$ and the inverse element of $a$ is $- a$.
For more details, we refer to \cite{BLU07}.

\begin{definition}[Sub-Riemannian structure]
\label{Sub-Riemannian structure}
On a step-two Carnot group $\G$, given an inner product $\langle \cdot, \cdot \rangle$ of $\g_1$, the canonical left-invariant sub-Riemannian structure $(\D,g)$ is defined as follows: the horizontal distribution $\D$ (a sub-bundle of the tangent bundle $T\G$ satisfying the bracket generating condition) is generated by $\g_1$ and the metric $g$ on $\D$ is determined by the inner product $\langle \cdot, \cdot \rangle$. To be more precise, let $m := \dim \g_1$ and assume $\{\X_1,\ldots, \X_m\}$ is an orthonormal basis of $\g_1$ under $\langle \cdot, \cdot \rangle$. Then we have
\[
\D_p := \mathrm{span} \{\w{\X}_1(p),\ldots, \w{\X}_m(p)\},
\]
where
\begin{align}\label{leftin}
\w{\X}_j (p) = D L_p (e) \X_j, \qquad \forall \, p \in \G,
\end{align}
with $L_p(q) := p \cdot q$, $e$ is the identity element on $\G$, and $D$ denotes the differential.
Furthermore, $\{\w{\X}_1(p),\ldots, \w{\X}_m(p)\}$ forms an orthonormal basis of $\D_p$ at every point $p \in \G$. 
\end{definition}

\begin{remark}
In this work we regard the Lie algebra $\g$ as the tangent space of $\G$ at the identity element $e$, namely $T_e \G$. So by \eqref{leftin}, $\w{\X}_j$ are just the left-invariant vector fields on $\G$.
\end{remark}

An absolutely continuous path $\gamma : [0,1] \to \G$ is called horizontal if $\dot{\gamma}(s) \in \D_{\gamma(s)}$ for a.e. $s \in [0,1]$, whose length can be calculated by 
\begin{equation}
\label{lenght}
\ell(\gamma) := \int_0^1 \sqrt{g(\dot{\gamma}(\tau),\dot{\gamma}(\tau))} \, d\tau.
\end{equation}
\begin{definition}[Carnot--Carath\'eodory distance]
\label{CC distance}
The Carnot--Carath\'eodory distance between two points $p, q \in \G$ is defined as
\[
d(p,q) := \inf\{\ell(\gamma): \gamma \mbox{ horizontal}, \gamma(0) = p, \gamma(1) = q\}.
\]
\end{definition}
By the celebrated Rashevskii--Chow Theorem (see for example \cite{ABB20, M02}), $d$ is a well-defined finite distance and its induced topology is the same as the manifold topology. In particular, this means that $d$ is continuous with respect to the standard Euclidean topology. Therefore $(\G,d)$ is a metric space. A length minimizing geodesic is an horizontal path $\gamma$ such that $\ell(\gamma) = d(\gamma(0), \gamma(1))$.

By definition it is easy to see that the Carnot--Carath\'eodory distance is left-invariant:
\begin{align}\label{leftd}
d(p_* \cdot p, p_* \cdot q) = d(p,q), \qquad \forall \, p_*, p ,q \in \G.
\end{align}

For every $\lambda > 0$, the dilation $\delta_\lambda$ on the Lie algebra $\g$ is the linear map such that $\delta_\lambda (a) = \lambda^\ell a$ for $a \in \g_\ell$, $\ell = 1,2$. It is actually a Lie algebra automorphism so it induces the dilation on $\G$, which is a Lie group automorphism and we still denote it by $\delta_\lambda$ as well. We have the following relation for the Carnot--Carath\'eodory distance $d$ and the dilation:
\begin{align}\label{hgd}
d(\delta_\lambda(p), \delta_\lambda(q)) = \lambda d(p,q), \qquad \forall \, p, q \in \G, \lambda > 0.
\end{align}

Finally the Haar measure  $\mu$ on $\G$ turns out to be the pushforward measure of the Lebesgue measure $\nu$ on $\g$ via the Lie group exponential map. With a fixed choice of the Haar measure $\mu$, $(\G, d, \mu)$ is a metric measure space. Furthermore, we have 
\[
\mu(\delta_\lambda(A)) = \lambda^Q \mu(A), \qquad \forall \, A \ \mbox{measurable}, \lambda > 0,
\]
with homogeneous dimension $Q: = \dim \g_1 + 2 \dim \g_2 = m +  2 \dim \g_2$, which is strictly larger than the topological dimension  $n := \dim \g_1 + \dim \g_2 = m +  \dim \g_2$.

\subsection{Measure contraction property on step-two Carnot groups}

By \cite[Proposition 15]{R13} we know that step-two Carnot groups (equipped with the Haar measure) are length spaces with negligible cut loci. From \eqref{GBM} of Proposition \ref{pMCP}, we cannot expect $\MCP(K,N)$ holds on $\G$ with positive $K$ while  \eqref{chasc} of Proposition \ref{pMCP} tells us that the $\MCP(K,N)$ is independent of the choice of the Haar measure $\mu$. Furthermore, from the dilation structure, $(\G,d,\mu)$ is isomorphic with $(\G,\lambda^{-1} d,\lambda^{-Q} \mu)$. Then \eqref{chasc} of Proposition \ref{pMCP} also implies that if $\MCP(K,N)$ holds with some negative $K$, then $\MCP(\lambda^2 \, K,N)$ holds for every $\lambda > 0$, which in turn implies $\MCP(0,N)$ holds after taking the limit. However, $\MCP(0,N)$ is stronger than $\MCP(K,N)$ for $K < 0$ by \eqref{monoto} of Proposition \ref{pMCP}, so we only focus on the study of $\MCP(0,N)$. 

\begin{definition}[Curvature exponent]\label{defce}
We define $N_{\CE}$, the curvature exponent of $\G$, as the minimum of all $N$ such that $\MCP(0,N)$ holds on $\G$. Equivalently, $N_{\CE}$ is the minimum of all $N$ such that for every $p\in \G$ and measurable $A \subset \G$ with $0< \mu(A)<\infty$, we have
	\[
	\mu(Z_s(p,A)) \ge s^N \mu(A), 
	\qquad\forall \, s \in[0,1].
	\]
\end{definition}

From the argument above, we know that $\MCP(0,N_{\CE})$ is the best measure contraction property on step-two Carnot group $(\G,d,\mu)$ in the sense that if $\MCP(K,N)$ holds, then we must have $K \le 0$ and $N \ge N_{\CE}$.

The first result about the measure contraction property in the setting of step-two Carnot groups  is $N_{\CE} = 2Q - n = 3n - 2m$ on Heisenberg groups by Juillet in \cite{J09}. Then similar results are established on corank $1$ Carnot groups and generalized H-type groups in \cite{R16} and \cite{BR18} respectively.

For general step-two Carnot groups, the finiteness of the curvature exponent $N_{\CE}$, is obtained on ideal Carnot groups in \cite{R13} and generalized for all step-two Carnot groups:

\begin{theorem}[Theorem 1.4 of \cite{BR20}]\label{tmcp}
Given a step-two Carnot group $\G$, there exists a constant $N > 1$ such that the measure contraction property $\mathrm{MCP}(0,N)$ holds.
\end{theorem}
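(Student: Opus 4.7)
The plan is to compute the Jacobian of the sub-Riemannian interpolation map $\Phi_s^p$ explicitly, exploiting the algebraic structure of step-two Carnot groups, and then to establish a uniform power-law lower bound for this Jacobian along minimizing geodesics. First I would use the left-invariance of $d$ in \eqref{leftd} together with the translation invariance of the Haar measure $\mu$ to reduce to the case $p = e$. Second I would parametrize length-minimizing geodesics issuing from $e$ by their initial covectors $(u, \xi) \in \g_1 \times \g_2^*$: setting $A_\xi$ to be the antisymmetric endomorphism of $\g_1$ defined by $\langle A_\xi u', v' \rangle = \xi([u', v'])$, the horizontal part of the geodesic satisfies $\dot x(t) = e^{t A_\xi} u$, while the vertical component in $\g_2$ is recovered from horizontality together with \eqref{mulalg}.

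With this parametrization, $\Phi_s^e$ pulled back through $\exp$ becomes an explicit smooth map $\Psi_s : (u,\xi) \mapsto \gamma_{(u,\xi)}(s)$ from $\g_1 \times \g_2^*$ into $\G$. By the change-of-variables formula and the negligibility of the cut locus (\cite[Proposition 15]{R13}), the MCP$(0,N)$ inequality of Definition \ref{defce} reduces to the pointwise Jacobian bound $|\J \Psi_s(u,\xi)| \ge s^N |\J \Psi_1(u,\xi)|$ for almost every $(u,\xi)$ yielding a minimizer on $[0,1]$. The Jacobian $\J \Psi_s$ factors through the spectrum of $A_\xi$: denoting the nonzero eigenvalues of $iA_\xi$ by $\pm\alpha_1, \ldots, \pm\alpha_r$, variations in $u$ contribute factors essentially of the form $(1 - \cos(s\alpha_j))/\alpha_j^2$, while variations in $\xi$ contribute a polynomial-in-$s$ factor whose degree depends on $\dim \ker A_\xi$ inside $\g_2^*$.

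The decisive step is then a uniform inequality of the shape
\[
\prod_{j=1}^r \frac{1 - \cos(s\alpha_j)}{1 - \cos\alpha_j} \cdot s^{N_0} \ge s^N
\]
valid for some finite $N$ on the parameter set corresponding to minimizers. Since an interior cut point would destroy minimality on step-two groups, one has $|\alpha_j| \le 2\pi$ on this set, and each factor is bounded below by a universal constant times $s^2$; multiplying yields the required polynomial lower bound for the horizontal block. The main obstacle is the careful treatment of the vertical block of $\J \Psi_s$: on a general step-two Carnot group, variations of $\xi$ in directions lying in the annihilator of $\mathrm{Im}\, A_\xi \subset \g_2$ only influence $\Psi_s$ at higher order in $s$, and moreover $\dim \ker A_\xi$ is not locally constant on $\g_2^*$. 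Controlling this degeneracy uniformly---so that a single finite exponent $N$ suffices regardless of which stratum of $\g_2^*$ the covector $\xi$ lies in---is the true technical core of the proof, and is where the specific structure of the bracket $[\cdot,\cdot]: \g_1 \wedge \g_1 \to \g_2$ must be exploited; this is precisely the content of \cite{BR20}.
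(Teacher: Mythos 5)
The paper offers no proof of this statement: it is imported verbatim as Theorem 1.4 of \cite{BR20}, so the comparison can only be with the argument given there. Your reduction is the right one --- by left-invariance \eqref{leftd} and negligibility of the cut locus, $\MCP(0,N)$ is equivalent to the a.e.\ Jacobian inequality $|\J \Psi_s|\ge s^N|\J \Psi_1|$ along minimizing covectors, and this is indeed the starting point of \cite{BR20} (cf.\ their Proposition 2.5, quoted in the paper). But the two ingredients you use to close the argument both fail on a general step-two Carnot group. First, the Jacobian of $\Psi_s$ does not factor into a product of terms $(1-\cos(s\alpha_j))/\alpha_j^2$ over the spectrum of $A_\xi$ except in special classes (corank $1$, generalized H-type): the eigenspaces of $A_\xi$ move with $\xi$, and the mixed $\partial_u$/$\partial_\xi$ blocks of $D\Psi_s$ do not decouple, which is precisely why the explicit computations of \cite{J09, R16, BR18} do not extend. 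Second, the claim that minimizers satisfy $|\alpha_j|\le 2\pi$ is false in general. Take the step-two group with $\g_1=\mathrm{span}\{\X,\Y,\mathrm{Z}\}$, $\g_2=\mathrm{span}\{\mathrm{T}\}$ and only $[\X,\Y]=\mathrm{T}$ nonzero: the curve $t\mapsto t\mathrm{Z}$ is a normal minimizer for every covector $(\mathrm{Z},\lambda \mathrm{T}^*)$, $\lambda\in\R$, so the spectrum of $A_\xi$ on the set of minimizing covectors is unbounded. Loss of minimality only controls the eigenvalues in directions actually excited by $u$, and without $|s\alpha_j|\le 2\pi$ the factor $1-\cos(s\alpha_j)$ can vanish at interior times, destroying any lower bound of the form $c\,s^2$.

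Beyond these two points, the part you yourself identify as ``the true technical core'' --- a single exponent $N$ valid across the strata of $\g_2^*$ where $\dim\ker A_\xi$ jumps, and across the degenerate directions of the vertical block --- is exactly what you defer to \cite{BR20}, so the proposal is a plan rather than a proof. For the record, \cite{BR20} does not obtain the exponent from spectral formulas at all: it exploits the compactness of the set of minimizing covectors reaching a fixed compact set, the analyticity of the exponential map in the step-two setting, and a compactness/stratification argument to extract one $N$ with $\J(s)\ge s^N\J(1)$ on that set, and then propagates the bound to the whole group via the dilations \eqref{hgd} and left-invariance. If you want a self-contained argument along your lines, you must either restrict to the classes where the product formula is available or replace your two estimates by such a compactness argument.
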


Note that it is shown recently in \cite{J21, RS23, MR23} that sub-Riemannian manifolds (equipped with smooth, positive measure) cannot admit any curvature-dimension condition. From Theorem \ref{tmcp}, the measure contraction property is a reasonable substitute of such condition in the setting of step-two Carnot groups. Regarding the curvature exponent $N_{\CE}$,  in \cite{R16}, the author proposed the following conjecture.

\begin{conjecture}\label{con1}
In the setting of Carnot groups with negligible cut loci, we have $N_{\CE} = N_{\GEO}$.
\end{conjecture}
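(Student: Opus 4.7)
The plan is to split Conjecture \ref{con1} into its two inequalities. The lower bound $N_{\GEO} \le N_{\CE}$ is essentially a definitional consequence: $N_{\GEO}$ governs the infinitesimal asymptotic behavior of $\mu(Z_s(p,A))/\mu(A)$ as $A$ shrinks to a generic point, while $\MCP(0,N_{\CE})$ demands the stronger global estimate $\mu(Z_s(p,A)) \ge s^{N_{\CE}} \mu(A)$ for every $s \in [0,1]$ and every measurable $A$. Testing the MCP inequality on shrinking balls around a generic point and letting $s \to 0$ forces $N_{\CE} \ge N_{\GEO}$. The substance of the conjecture is therefore the reverse inequality $N_{\CE} \le N_{\GEO}$.

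For this reverse direction I would follow the Jacobian comparison strategy that succeeded on Heisenberg, H-type, and corank one groups in \cite{J09, R16, BR18}. Fix $p \in \G$ and parametrize $\G \setminus \Cut(p)$ by the sub-Riemannian exponential map $\exp_p$ defined on an open subset of the cotangent fiber at $p$. Disintegrating $\mu$ along geodesics leaving $p$, the quantity $\mu(Z_s(p,A))$ becomes the integral over $\xi \in \exp_p^{-1}(A \setminus \Cut(p))$ of the Jacobian $\J_s(\xi)$ of the $s$-contraction $y \mapsto \Phi^p(y,s)$. On step-two Carnot groups $\J_s$ admits an explicit description in terms of the structure constants of $\g$ and elementary trigonometric factors coming from the Hamiltonian flow. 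It would then suffice to establish the pointwise comparison
\begin{align*}
s \cdot \J_s(\xi) \;\ge\; s^{N_{\GEO}} \cdot \J_1(\xi), \qquad s \in [0,1],
\end{align*}
for a.e.\ admissible $\xi$, and integrate to obtain $\MCP(0, N_{\GEO})$.

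The main obstacle is precisely this pointwise inequality. The exponent $N_{\GEO}$ is extracted from the leading order expansion of $\J_s$ near $s=0$ at a generic covector, so the inequality is automatic as $s \to 0$; the delicate point is to control intermediate values of $s$ and, more importantly, geometrically distinguished $\xi$ (for example covectors annihilating a sub-stratum of $\g_2^*$, or $\xi$ realising a degeneration of the Hamiltonian flow) where a strictly larger exponent may be required. Since the Jacobian depends on the structure constants of $\g$ in a non-trivial analytic way, this global comparison should be expected to be delicate, and most likely false in full generality. Indeed, the abstract of the present paper announces that even the improved lower bound of \cite{NZ24} can be strictly smaller than $N_{\CE}$ for suitable families of step-two groups obtained by degenerating structure constants. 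This strongly suggests that a pure Jacobian argument will not suffice to prove Conjecture \ref{con1}, and that either an averaged/integrated estimate, or a revision of the conjectural identification $N_{\CE} = N_{\GEO}$ itself, is needed.
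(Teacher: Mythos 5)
The statement you were given is a conjecture, and the paper does not prove it --- on the contrary, it explicitly records that Conjecture \ref{con1} is \emph{false}: by \cite{NZ24} there exists a step-two Carnot group on which $N_{\CE} \ge N_0 > N_{\GEO}$, where $N_0$ is the refined lower bound recalled in Section \ref{sec2}. So there is no proof to compare against, and no strategy can close your argument. Your first inequality $N_{\GEO} \le N_{\CE}$ is correct and essentially definitional, as you say. The genuine gap is in the reverse direction, and it is not merely that the pointwise comparison $s\,\J_s(\xi) \ge s^{N_{\GEO}}\,\J_1(\xi)$ is ``delicate at intermediate $s$'': it already fails at the infinitesimal level that you describe as automatic. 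The exponent $N_{\GEO}$ only captures the small-time expansion of the Jacobian at \emph{generic} covectors; at distinguished $p=\xi+u$ for which the spaces $W_\ell(p)$ with $\ell \ge 1$ (defined just before \eqref{Wi}) are non-trivial, the expansion degenerates and forces the strictly larger exponent $N(p)$ of \eqref{eqnp}. Taking the supremum yields $N_{\CE} \ge N_0$ as in \eqref{eqnce}, and $N_0$ can exceed $N_{\GEO} = 2Q-n$ (for the group attached to the star graph $K_{1,3}$ one has $N_0 = 17 > 13 = 2Q-n$). Hence even $\MCP(0,N_{\GEO})$ fails on such groups, and no averaged or integrated version of your estimate can rescue the conjecture.

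Your closing paragraph correctly senses all of this, but that turns the proposal into an (accurate) explanation of why the conjecture should be doubted rather than a proof of it. For context: the paper's own contribution is one step further still --- it shows that even the corrected Conjecture \ref{con2}, namely $N_{\CE} = N_0$, fails on some ideal Carnot group (Theorem \ref{c1}), by exhibiting a sequence of M\'etivier structures degenerating to the $K_{1,3}$ group and using the lower semicontinuity of $N_{\CE}$ under convergence of structure constants (Corollary \ref{cmcp}) against the failure of lower semicontinuity of $N_0$.
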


Here the geodesic dimension $N_{\GEO}$ is defined by
\[
\sup_{p \in \G}\left[ \inf\left\{ N > 0 :
\sup_{A \in \scr F} \limsup_{s \to 0} \frac{\mu(Z_s(p,A))}{s^N \mu(A)} = \infty 
\right\} \right],
\]
where $\scr F := \{A \subset \G \text{ bounded, measurable with } 0< \mu(A) <\infty\}$. Note that from left-invariance, the outer supremum does not play any role and we can choose any $p$ in the group $\G$, in particular the identity element $e$. For an equivalent definition of the geodesic dimension (at a point), we refer to \cite[Definition 5.47]{ABR18}. Similarly, in our setting the point will not play any role. The equivalence is a result of \cite[Theorem D]{ABR18} there.

However, Conjecture \ref{con1} is not true, which is shown in \cite{NZ24}. More precisely, we found a new lower bound of the curvature exponent, which we denote by $N_0$, and a step-two Carnot group on which $N_{\CE} \ge N_0 > N_{\GEO}$. As a result, there is a gap between $N_{\CE}$ and $N_{\GEO}$. Naturally, we have the following new conjecture.

\begin{conjecture}\label{con2}
In the setting of Carnot groups with negligible cut loci, we have $N_{\CE} = N_0$.
\end{conjecture}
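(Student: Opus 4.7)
The plan is to establish the equality $N_{\CE} = N_0$ by proving the matching upper bound $N_{\CE} \le N_0$, since the reverse inequality $N_{\CE} \ge N_0$ is already known from \cite{NZ24}. By the equivalent formulation in Definition \ref{defce}, this amounts to verifying that $\mu(Z_s(p,A)) \ge s^{N_0}\mu(A)$ holds for every $p \in \G$, every measurable $A$ with $0 < \mu(A) < \infty$, and every $s \in [0,1]$. Using left-invariance of the Carnot--Carath\'eodory distance \eqref{leftd} and of the Haar measure, I would first reduce to the basepoint $p = e$, so that the question becomes a pointwise statement about the map $\Phi^e(\cdot, s)$.

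Next I would parametrize geodesics issuing from $e$ by their initial covectors via the Hamiltonian flow of the sub-Riemannian structure: to each $y \in \G \setminus \Cut(e)$ there corresponds (by negligibility of the cut locus) a unique initial covector $\xi(y)$, and $\Phi^e(y, s)$ is obtained by integrating the Hamiltonian flow from $\xi(y)$ over time $s$. A change of variables then gives
\begin{align*}
\mu(Z_s(e, A)) = \int_A \bigl| \det D_y \Phi^e(y, s) \bigr| \, d\mu(y),
\end{align*}
so the problem reduces to proving the pointwise bound $|\det D_y \Phi^e(y, s)| \ge s^{N_0}$. The natural way to organize this Jacobian is to factor $\Phi^e(y, s)$ through the dilation $\delta_s$ using \eqref{hgd} together with the group law \eqref{mulalg}, isolating the horizontal contribution (scaling like $s^m$ on $\g_1$) from the vertical contribution (scaling like $s^{2(n-m)}$ on $\g_2$, and together summing to the homogeneous dimension $Q$).

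The core of the argument would be to control the remaining correction factor by $s^{N_0 - Q}$, where this extra decay encodes the additional contraction produced by the sub-Riemannian Jacobi equation along $\tau \mapsto \Phi^e(y, \tau)$. Following the template of \cite{BR20, R13}, I would express this factor through a matrix Riccati equation along the geodesic and compare its solution with the model Jacobian used in \cite{NZ24} to derive the lower bound $N_0$. If the extremal Jacobians that realize $N_0$ in the lower-bound analysis coincide, direction by direction, with the worst-case Jacobians appearing in this upper-bound analysis, then the two estimates match and Conjecture \ref{con2} follows.

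The main obstacle, which I expect to be the crux of the matter, is that the geodesic directions responsible for the lower bound $N_0$ in \cite{NZ24} and those producing the extremal Jacobian here need not coincide in general: the lower-bound construction optimizes over a restricted family of perturbations, while the upper bound must be uniform over all normal initial covectors and all points of the group. Closing this gap would require either a new sharp Jacobian comparison principle for step-two Carnot groups, or a proof that the extremal Jacobian is actually attained along the directions exploited in \cite{NZ24}. Since the paper's abstract signals that the analysis proceeds through limits of the structure constants of the Lie algebra, I anticipate that controlling this matching under such degenerations of the algebra is precisely where the strategy may break down, and any attempted proof along these lines would have to confront that delicate limiting behavior head-on.
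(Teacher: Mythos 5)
The statement you set out to prove is false, and the paper you are working from is devoted precisely to disproving it: Theorem \ref{c1} exhibits an ideal Carnot group on which $N_{\CE} > N_0$, so no argument can close the gap you describe. The concrete step where your proposal breaks is the pointwise Jacobian bound $|\det D_y \Phi^e(y,s)| \ge s^{N_0}$ for all $s \in [0,1]$. By the characterization of $\MCP(0,N)$ through exactly such Jacobian estimates (cf. \cite[Proposition 2.5]{BR20}), that bound is equivalent to $N_{\CE} \le N_0$, which is what fails. The number $N_0$ is extracted from the small-time asymptotics of the Jacobian of the sub-Riemannian exponential map; it controls the order of vanishing at $s = 0$ but gives no uniform control for $s$ bounded away from $0$. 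As the paper remarks after Theorem \ref{c1}, local properties of the Jacobian (or of the geodesic cost) cannot characterize the curvature exponent, which is global in nature. Your closing paragraph correctly senses the mismatch between the directions realizing $N_0$ and the worst-case directions for the upper bound, but this is not a technical gap awaiting a sharper comparison principle; it is irreparable.

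For comparison, the paper's treatment of this statement is a refutation by a semicontinuity argument rather than by a direct computation of $N_{\CE}$ on any single group (which is out of reach, since cut loci are unknown in this generality). One deforms the star-graph group $\G_\infty$ of \cite[\S~7.3]{NZ24}, where $N_{0,\G_\infty} = 17$, through a sequence of brackets $[\cdot,\cdot]_k$ whose groups $\G_k$ are M\'etivier, hence ideal, so that $N_{0,\G_k} = 2Q - n = 13$ by Lemma \ref{lemM}. The curvature exponent is lower semicontinuous along such structure-constant convergence: the distances $d_k$ converge uniformly on compacta (Proposition \ref{t1}), yielding pointed measured Gromov--Hausdorff convergence and stability of $\MCP(0,N)$ (Corollary \ref{cmcp}). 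If the equality $N_{\CE} = N_0$ held on all ideal Carnot groups, one would conclude
\begin{align*}
17 = N_{0,\G_\infty} \le N_{\CE,\G_\infty} \le \liminf_{k \to \infty} N_{\CE,\G_k} = \liminf_{k \to \infty} N_{0,\G_k} = 13,
\end{align*}
a contradiction. In short, $N_{\CE}$ is lower semicontinuous with respect to Lie algebra convergence while $N_0$ is not, and this incompatibility is what kills the conjecture you were trying to prove.
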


\subsection{Main result}

The main target of this work is to show that Conjecture \ref{con2} is not true as well. However, it is hard to construct a counterexample directly like the one in \cite{NZ24}. Instead, we illustrate this by showing that the new lower bound $N_0$ is not lower semicontinuous w.r.t. the convergence of structure constants of Lie algebra (see Definition \ref{lac} and Remark \ref{sc} below) while the curvature exponent $N_\CE$ is. In the following set $\N = \{1, 2, \ldots\}$ and we use $\ol{\N}$ to denote the set $\N \cup \{\infty\}$. 

\begin{definition}\label{lac}
We say a sequence of Lie algebra $\{(\g,[\cdot,\cdot]_k)\}_{k \in \N}$ converges to the limit Lie algebra $(\g,[\cdot,\cdot]_\infty)$ if the followings hold:

1. $\{(\g,[\cdot,\cdot]_k)\}_{k \in \ol{\N}}$ has the same Lie algebra stratification, i.e.
\begin{align*}
\mathfrak{g} = \mathfrak{g}_1 \oplus \mathfrak{g}_2, \quad
[\mathfrak{g}_1, \mathfrak{g}_1]_k = \mathfrak{g}_2, \quad
[\mathfrak{g}_1, \mathfrak{g}_2]_k = \{0\}, \quad \forall \, k \in \ol{\N};
\end{align*}

2. for any $a,b \in \g$, we have $[a,b]_k \to [a,b]_\infty$ as $k \to \infty$.

\end{definition}

\begin{remark}\label{sc}
Fix a basis $\{\X_1 , \ldots, \X_{m}\}$ of $\g_1$ and a basis  $\{\Y_1 , \ldots, \Y_{n - m}\}$ of $\g_2$ respectively. For any $1 \le i,j \le m$, we have
\[
[\X_i, \X_j]_k = \sum_{\ell = 1}^{n - m} c_{ij,k}^\ell \Y_\ell, \qquad \forall \, k \in \ol{\N}.
\]
From this equation we know that the convergence of the Lie algebra in Definition \ref{lac} is equivalent to  the convergence of the structure constants 
\[
c_{ij,k}^\ell \to c_{ij,\infty}^\ell, \qquad \forall \, 1 \le i,j \le m, 1 \le \ell \le n - m
\]
as $k \to \infty$.
\end{remark}

For any fixed $k \in \ol{\N}$, the Lie algebra $[\cdot,\cdot]_k$ induces a group structure $\ast_k$ on $\g$ by 
\[
a \ast_k b := a + b + \frac{1}{2} [a,b]_k,  \qquad \forall \, a,b \in \g, k \in \ol{\N}.
\]
In the following we denote $(\g, \ast_k)$ simply by $\G_k$. Furthermore, given some fixed inner product $\langle \cdot, \cdot \rangle$ on $\g_1$ (independent of $k \in \ol{\N}$), we can define a canonical left-invariant sub-Riemannian structure as before and we use $d_k$ to denote the Carnot--Carath\'eodory distance of such sub-Riemannian structure on $\G_k$. 

For $k \in \ol{\N}$, we define $\rf_k \in \Gamma(\g_1^* \otimes T \g)$ by letting $\rf_k|_p$ to be a linear map from $\g_1$ to $T_p \g$ given by $\rf_k|_p(v) := D L_{p, \G_k}(0) v$ for each $p \in \g$. Here $L_{p, \G_k}(q) := p \ast_k q$, $k \in \ol{\N}$. Since $\g$ is a vector space, we can identify $T_p \g$ with $\g$ itself (like we do for vector spaces, not for Lie groups since we have several group structures here) and after the identification (which is also the bundle trivialization) we have
\[
\rf_k|_p(v) =  v + \frac{1}{2} [p,v]_k, \qquad \forall \, v \in \g_1, p \in \g, k \in \ol{\N}.
\]
From the expression above it is easy to know that $\{\rf_k\}_{k \in \ol{\N}} \subset \mathrm{Lip}\Gamma(\g_1^* \otimes T \g)$ and $\rf_k \to \rf_\infty$ in the topology of $\mathrm{Lip}\Gamma(\g_1^* \otimes T \g)$. Here $\mathrm{Lip}\Gamma(\g_1^* \otimes T \g)$ denotes the locally Lipschitz sections of $\g_1^* \otimes T \g$. For the precise definition and the topology on such space, we refer to \cite[\S~2.1]{ALN23}. Now define a continuous varying norm $\rN$ on $\g \times \g_1$ by
\[
\rN(p,v) := \sqrt{\langle v, v \rangle}.
\]
Then it follows from \cite[Corollary 3.19]{ALN23} that $d_k = d_{(\rf_k,\rN)}$ for $k \in \ol{\N}$ and thus the main result there can be applied to our setting. The result is the following.

\begin{proposition}[(iv) of Theorem 1.4 of \cite{ALN23}]\label{t1}
Assume $\{(\g,[\cdot,\cdot]_k)\}_{k \in \N}$ converges to $(\g,[\cdot,\cdot]_\infty)$ as $k \to \infty$. Then we have $d_k(p,q) \to d_\infty(p,q)$ uniformly on every compact set of $\g \times \g$.  
\end{proposition}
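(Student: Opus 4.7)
The plan is to derive the proposition as a direct application of the stability theorem \cite[Theorem~1.4(iv)]{ALN23}, whose setup has already been arranged in the paragraph preceding the statement: we have the identification $d_k = d_{(\rf_k,\rN)}$ for every $k \in \ol{\N}$, and the reference norm $\rN$ is independent of $k$. What remains is therefore to verify that the data $(\rf_k,\rN)$ converges to $(\rf_\infty,\rN)$ in the sense required by that theorem, and then to unpack the conclusion.

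First I would verify that $\rf_k \to \rf_\infty$ in the topology of $\mathrm{Lip}\Gamma(\g_1^* \otimes T\g)$. From the explicit formula for $\rf_k$ recalled just above, we have
\[
(\rf_k - \rf_\infty)|_p(v) \;=\; \tfrac{1}{2}\bigl([p,v]_k - [p,v]_\infty\bigr), \qquad p \in \g, \ v \in \g_1.
\]
The right-hand side depends bilinearly on $(p,v)$, with coefficients given by the differences of structure constants $c_{ij,k}^\ell - c_{ij,\infty}^\ell$. By Remark \ref{sc} each such difference tends to zero as $k \to \infty$. Hence, on any compact set $K \subset \g$, both the sup norm and the Lipschitz seminorm of the map $p \mapsto (\rf_k - \rf_\infty)|_p$ restricted to $K$ vanish in the limit, which is precisely the mode of convergence used in \cite[\S~2.1]{ALN23}. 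Combined with the fact that $\rN$ does not depend on $k$, this produces the hypothesis required to invoke \cite[Theorem~1.4(iv)]{ALN23}.

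Second, applying \cite[Theorem~1.4(iv)]{ALN23} directly to the sequence $(\rf_k,\rN)$ yields the uniform convergence of the associated Carnot--Carath\'eodory distances on compact subsets of $\g \times \g$. Since $d_k = d_{(\rf_k,\rN)}$ for every $k \in \ol{\N}$, this is exactly the conclusion of the proposition.

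The only delicate step I anticipate is a careful matching of the locally Lipschitz topology of \cite[\S~2.1]{ALN23} with the elementary convergence that is actually available here; this is the single piece of background I would spend time on. However, since $\rf_k$ is affine in $p$ with all dependence on $k$ concentrated in finitely many real structure constants, the required Lipschitz and sup-norm estimates on compact sets are immediate, and no genuine obstacle arises.
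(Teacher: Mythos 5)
Your proposal is correct and follows essentially the same route as the paper, which establishes the identification $d_k = d_{(\rf_k,\rN)}$ and the convergence $\rf_k \to \rf_\infty$ in $\mathrm{Lip}\Gamma(\g_1^* \otimes T\g)$ in the paragraph preceding the statement and then cites \cite[Theorem~1.4(iv)]{ALN23} directly. Your explicit verification of the locally Lipschitz convergence via the structure constants is just a slightly more detailed write-up of the same step the paper treats as immediate.
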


\begin{remark}
In the first version of the manuscript Proposition \ref{t1} is proved by comparing the endpoint maps of different group structures. For the upper semicontinuity of $k \mapsto d_k(p,q)$ w.r.t. the Lie algebra convergence, we used the uniform heat kernel bounds under generalized curvature-dimension inequalities in \cite{BBG14, BG17}, which is not really needed.  As pointed out by the referee, such upper semicontinuity is obtained in \cite{ALN23} with the help of degree theory or quantitative open-mapping type result under suitable smoothness assumption. See also \cite[\S~3.3.4]{ABB20} for such kind of argument.
\end{remark}

One result of Proposition \ref{t1} is the pointed measured Gromov--Hausdorff convergence (see \cite[Definition 3.24]{GMS15}), which in turn implies the stability of the measure contraction property $\MCP(K,N)$ under the Lie algebra convergence. 

\begin{corollary}[\cite{GMS15} and Lemma 2.5 of \cite{BMRT24}]\label{cmcp}
Assume $\{(\g,[\cdot,\cdot]_k)\}_{k \in \N}$ converges to $(\g,[\cdot,\cdot]_\infty)$ as $k \to \infty$ and fix a  Lebesgue measure $\nu$ on $\g$ (independent of $k \in \ol{\N}$). Then $(\G_k, d_k, \nu, 0)$ converges to $(\G_\infty, d_\infty, \nu, 0)$ in the pointed measured Gromov--Hausdorff sense  as $k \to \infty$. 

As a result, if assume further that for every $k \in \N$, $\MCP(K,N)$ holds on $(\G_k, d_k, \nu)$, then $\MCP(K,N)$ also holds on $(\G_\infty, d_\infty, \nu)$. In particular, $N_\CE$ is lower semicontinuous w.r.t. such Lie algebra convergence.
\end{corollary}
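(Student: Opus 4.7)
The strategy is to realize the pointed measured Gromov--Hausdorff convergence by the identity map of $\g$, which is available because all the groups $\G_k$ share the same underlying set and the same reference measure $\nu$, and because Proposition \ref{t1} supplies locally uniform convergence of the distances. Fix $R>0$ and $\epsilon>0$. The closed $d_\infty$-ball $\overline{B_{d_\infty}(0,R+1)}$ is Euclidean-compact (since the Carnot--Carath\'eodory topology induced by $d_\infty$ coincides with the manifold topology on $\g$ and $d_\infty$ is proper), so by Proposition \ref{t1} one can choose $k_0$ such that $|d_k(p,q)-d_\infty(p,q)|<\epsilon/2$ for every $k\ge k_0$ and every $p,q$ in this compact set. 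Standard bookkeeping then yields the three metric conditions for a pmGH approximation: $B_{d_k}(0,R)\subset B_{d_\infty}(0,R+\epsilon/2)$; the identity on $B_{d_k}(0,R)$ is an $\epsilon$-isometry into $(\g,d_\infty)$; and its image is $\epsilon$-dense in $B_{d_\infty}(0,R-\epsilon/2)$.

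For the measure part, since the identity pushes $\nu$ forward to itself, it is enough to verify that $\nu(B_{d_k}(0,R)\,\triangle\,B_{d_\infty}(0,R))\to 0$ for a dense set of radii $R$, which in turn follows from the uniform convergence $d_k\to d_\infty$ together with the fact that, by continuity of $d_\infty$ in the Euclidean topology, the $d_\infty$-sphere has vanishing $\nu$-measure at $\nu$-almost every radius. Combining this with the metric approximation above produces the desired pmGH convergence $(\G_k,d_k,\nu,0)\to(\G_\infty,d_\infty,\nu,0)$.

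The second assertion then follows from the stability of $\MCP(K,N)$ under pmGH convergence (Lemma 2.5 of \cite{BMRT24}, going back to \cite{GMS15}): it immediately gives $\MCP(K,N)$ on the limit whenever it holds on every $\G_k$. For the lower semicontinuity of $N_\CE$, pick $N>\liminf_{k\to\infty}N_\CE(\G_k)$; along a subsequence $\MCP(0,N)$ holds on $(\G_{k_j},d_{k_j},\nu)$ and hence passes to the limit, forcing $N_\CE(\G_\infty)\le N$, and sending $N\downarrow\liminf_{k\to\infty}N_\CE(\G_k)$ concludes. The mildly delicate step is the weak convergence of the restricted measures, because the balls $B_{d_k}(0,R)$ need not be monotone in $k$; this is overcome by restricting to generic radii as described, the metric comparison being otherwise straightforward thanks to the common underlying manifold.
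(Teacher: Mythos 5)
Your proposal is correct and follows essentially the same route as the paper, which does not spell out a proof but delegates exactly this reasoning to the cited references: Proposition \ref{t1} upgrades to pointed measured Gromov--Hausdorff convergence because all the $\G_k$ share the same underlying set and reference measure $\nu$, and then MCP stability (Lemma 2.5 of \cite{BMRT24}, after \cite{GMS15}) gives the second assertion and the lower semicontinuity of $N_\CE$. One minor slip worth fixing: knowing $|d_k-d_\infty|<\epsilon/2$ only on the compact set $\overline{B_{d_\infty}(0,R+1)}\times\overline{B_{d_\infty}(0,R+1)}$ does not by itself yield $B_{d_k}(0,R)\subset B_{d_\infty}(0,R+\epsilon/2)$, since a priori $B_{d_k}(0,R)$ could meet the complement of that set; one should first trap $B_{d_k}(0,R)$ in a fixed Euclidean-compact set, e.g.\ by noting that any path from $0$ leaving a large Euclidean ball must cross its boundary sphere, on which $d_k(0,\cdot)>R$ for $k$ large by the uniform convergence there.
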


Now we can disprove Conjecture \ref{con2} by  showing $N_0$ is not lower semicontinuous. In fact, from the construction of our counterexample we can show more. Here is our main result.

\begin{theorem}\label{c1}
There exists an ideal Carnot group such that $N_{\CE} > N_0$.
\end{theorem}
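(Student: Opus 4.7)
The plan is to exploit the contrast between the lower semicontinuity of $N_\CE$ established in Corollary \ref{cmcp} and the discontinuous behaviour expected for $N_0$ under convergence of Lie algebra structure constants. The starting point is the universal inequality $N_0 \le N_\CE$, valid on every step-two Carnot group by the construction of $N_0$ in \cite{NZ24}; this reduces Theorem \ref{c1} to producing a sequence of ideal Carnot groups along which $N_0$ fails to be lower semicontinuous in the sense of Definition \ref{lac}.

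First I would construct an explicit family $\{(\g,[\cdot,\cdot]_k)\}_{k \in \N}$ of step-two ideal Carnot groups converging, in the sense of Definition \ref{lac}, to a limit algebra $(\g,[\cdot,\cdot]_\infty)$ which is itself ideal. A natural candidate is a small perturbation of the counterexample exhibited in \cite{NZ24}, obtained by deforming a few structure constants through a parameter $\varepsilon_k \to 0$ while preserving the bracket-generating condition. Two properties must be checked: (a) every $\G_k$ as well as $\G_\infty$ is ideal, which is an open condition on the structure constants and hence stable under small perturbations away from boundary cases; and (b) the lower bound $N_0$ drops in the limit, i.e.\ $\liminf_{k} N_0(\G_k) < N_0(\G_\infty)$.

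Second, I would compute $N_0$ on both sides of the limit using its definition from \cite{NZ24} as a supremum of a geometric quantity attached to the Jacobian of the sub-Riemannian exponential map along normal extremals. The expected mechanism for the jump is that the critical configuration of covectors attaining $N_0(\G_\infty)$ becomes degenerate, or ceases to exist, for every $\G_k$, so that along the sequence the supremum is realized on a strictly less constraining subset of geodesics. Arranging the deformation so that both suprema admit a tractable closed form and then verifying the strict inequality in the limit is the principal technical obstacle; this is where explicit choices (e.g.\ a one-parameter family whose singular set in the unit cosphere collapses only at $k=\infty$) must be made with care.

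Finally, argue by contradiction. Suppose $N_\CE = N_0$ held on each $\G_k$ for $k \in \ol{\N}$. Then, by Corollary \ref{cmcp},
\[
N_0(\G_\infty) = N_\CE(\G_\infty) \le \liminf_{k\to\infty} N_\CE(\G_k) = \liminf_{k\to\infty} N_0(\G_k),
\]
contradicting the jump established in the previous step. Combined with the universal inequality $N_0 \le N_\CE$, this forces the existence of some $k \in \ol{\N}$ with $N_0(\G_k) < N_\CE(\G_k)$, and the corresponding ideal Carnot group is the desired counterexample.
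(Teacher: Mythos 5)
Your overall strategy --- playing the lower semicontinuity of $N_{\CE}$ from Corollary \ref{cmcp} against a failure of lower semicontinuity of $N_0$, and closing with a contradiction argument --- is exactly the paper's, but your proposed construction is self-defeating at a key point. You require the limit algebra $(\g,[\cdot,\cdot]_\infty)$ to be \emph{itself ideal}, and simultaneously you require the jump $\liminf_k N_{0,\G_k} < N_{0,\G_\infty}$. These two requirements are incompatible: by Lemma \ref{lemM}, on any M\'etivier (equivalently, ideal) group one has $N_0 = 2Q-n = \dim\g_1 + 3\dim\g_2$, which depends only on the dimensions of the strata; since Definition \ref{lac} fixes the stratification along the whole sequence, an all-ideal family (limit included) has \emph{constant} $N_0$, and no jump can occur. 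The mechanism only works if the limit group is \emph{not} ideal and has $N_0 > 2Q-n$. This is what the paper does: the $\G_k$ are explicit ideal deformations (structure constants $\pm 1/k$) of the non-ideal $K_{1,3}$-graph group of \cite{NZ24}, for which $N_{0,\G_\infty}=17$ while $N_{0,\G_k}=13$ for finite $k$. Note also that the openness of the M\'etivier condition, which you invoke, runs in the wrong direction for your setup: it would preserve idealness of the $\G_k$ near an ideal limit, whereas what is needed is to perturb a non-ideal group \emph{into} ideal ones.

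Once the limit is non-ideal, your final contradiction must be restated: the hypothesis $N_{\CE}=N_0$ may only be imposed on the ideal groups $\G_k$, $k$ finite, while at $k=\infty$ one uses only the universal inequality $N_{0,\G_\infty}\le N_{\CE,\G_\infty}$ from Theorem \ref{tnz24}; the conclusion then correctly localizes the counterexample at some finite $k$, i.e.\ at an ideal group. As written, your "some $k\in\ol{\N}$" could land on $k=\infty$, which would not prove the statement. Finally, the heart of the matter --- an explicit family together with the verification that each $\G_k$ is M\'etivier (e.g.\ by computing $J_{u,\G_k}^2$) and that $N_{0,\G_\infty} > 2Q-n$ --- is deferred in your proposal as "the principal technical obstacle," so the proof is not complete as it stands.
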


Recall that a Carnot group is called ideal if it does not admit non-trivial abnormal minimizing geodesics (see also Definition \ref{defM} and Remark \ref{idealM} below). We also refer to \cite{R13, R16} for more details about the definition as well as properties of ideal Carnot groups.

\begin{remark}
Recall that in \cite{NZ24} the lower bound $N_0$ actually comes from the small time asymptotic behavior of the Jacobian of the sub-Riemannian exponential map. In fact it is also related with the small time asymptotic behavior of sub-Laplacian of the geodesic cost function associated with strongly normal geodesic. For more details we refer to \cite[Theorem C]{ABR18}, together with \cite[Theorem B and Remark 4.11]{ABR18}, while a related equivalent characterization of $\MCP(0,N)$ is given in \cite[Proposition 2.5]{BR20}. Thus this corollary says local properties of the Jacobian (or geodesic cost function) cannot characterize the curvature exponent, which is global in nature. 
\end{remark}

\begin{remark}
The phenomenon of the gap between the curvature exponent and the geodesic dimension is also known for the measure contraction property on sub-Finsler Heisenberg groups recently. See the works \cite{BT23, BMRT242, BMRT24}. Also in this setting, the small time behavior cannot characterize the curvature exponent. See \cite[(ii) of Remark 5.13]{BMRT242}, together with \cite[Example 5.14]{BMRT242} or \cite[Theorem 1.2]{BMRT24}. 
\end{remark}

We guess $N_\CE$ is actually continuous w.r.t. such Lie algebra convergence and as a result it could be a non integer number while the lower bound $N_0$ is always an integer. However, it is hard to prove since we do not know the cut time of every geodesic on step-two Carnot groups. Until now, the cut times are only known on multi-dimensional Heisenberg groups \cite[\S~13.2]{ABB20}, corank $2$ groups \cite{BBG12}, free step-two Carnot group with $3$ generators \cite{MM17}, and Reiter--Heisenberg groups \cite{MM23}. We also refer the interested reader to \cite{RS17} for the conjecture about the cut locus on free step-two Carnot groups and \cite{MM25} for the recent pregresses on free step-two Carnot group with $4$ generators.

\begin{conjecture}\label{con3}
There exists a step-two Carnot group on which $N_{\CE}$ is not an integer.
\end{conjecture}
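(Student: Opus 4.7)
The plan is to combine the machinery of Theorem \ref{c1} with a continuity statement for $N_{\CE}$ under Lie algebra convergence, applying the intermediate value theorem along a judiciously chosen one-parameter family. The author's remark preceding the conjecture already identifies this as the natural route: since $N_0$ is integer-valued, any continuous family along which $N_{\CE}$ strictly increases must cross non-integer values, producing the desired example.

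Concretely, first I would fix an interpolation of Lie brackets $[a,b]_t := (1-t)[a,b]_0 + t[a,b]_1$, with endpoints chosen so that $N_{\CE}$ takes an explicitly computable integer value on one side and a strictly larger value on the other. A natural choice is $\G_0$ a Heisenberg group, for which Juillet \cite{J09} proved $N_{\CE}(\G_0) = 2Q - n$ is an integer equal to $N_0(\G_0)$, and $\G_1$ one of the ideal Carnot groups produced in Theorem \ref{c1}, for which $N_{\CE}(\G_1) \ge N_0(\G_1) > N_{\CE}(\G_0)$ (after arranging the interpolation so that the jump in $N_0$ along the family gives a strict gap at the endpoints). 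By Corollary \ref{cmcp} applied to any sequence $t_n \to t_\infty$ in $[0,1]$, the function $t \mapsto N_{\CE}(\G_t)$ is lower semicontinuous on $[0,1]$.

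Second, I would try to establish upper semicontinuity of $t \mapsto N_{\CE}(\G_t)$, which together with the lower semicontinuity above would upgrade $N_{\CE}$ to a continuous function on $[0,1]$. The strategy is to show that failures of $\MCP(0,N)$ are stable under perturbation of the bracket: given a witness $(p, A, s)$ of the failure of $\MCP(0, N)$ on $\G_{t_\infty}$, with $A$ bounded and bounded away from $\mathcal{C}_{t_\infty}(p)$, use the uniform distance convergence of Proposition \ref{t1} together with continuity of the sub-Riemannian exponential map in the structure constants to conclude that the $s$-intermediate sets $Z_s^{t}(p, A)$ converge in measure to $Z_s^{t_\infty}(p, A)$. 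A slight loss $N \to N + \eta$ would then transfer the failure to $\G_t$ for all $t$ sufficiently close to $t_\infty$, giving $\limsup_{t \to t_\infty} N_{\CE}(\G_t) \le N_{\CE}(\G_{t_\infty})$. With continuity established and $N_{\CE}(\G_0) < N_{\CE}(\G_1)$, the intermediate value theorem produces $t_* \in (0,1)$ with $N_{\CE}(\G_{t_*}) \notin \mathbb{Z}$.

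The main obstacle is precisely this upper semicontinuity step, since it requires uniform control of the cut loci along the family, and cut times on step-two Carnot groups are explicitly known only in a handful of cases (\cite{ABB20, BBG12, MM17, MM23}) — this is exactly the technical gap flagged by the author before stating Conjecture \ref{con3}. An alternative route that avoids the semicontinuity question altogether is direct computation on one of these known families: set up the variational expression for the optimal contraction exponent as an extremum over geodesics of an integral involving the Jacobian of the exponential map, in the spirit of \cite{ABR18, BR20}, and verify arithmetically that the resulting critical value lies strictly between two consecutive integers. This second route is computationally heavy and delicate in its own right, but it has the advantage of being, in principle, a finite calculation on a single group rather than a perturbative argument across a family.
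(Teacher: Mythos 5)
The statement you are addressing is a \emph{conjecture}: the paper offers no proof of it, and explicitly identifies the missing ingredient, so your proposal must stand on its own --- and it does not. Everything funnels through continuity of $t \mapsto N_{\CE}(\G_t)$ along a bracket-interpolating family. The lower semicontinuity half is indeed Corollary \ref{cmcp}, but the upper semicontinuity half is precisely the open obstruction flagged in the remark preceding Conjecture \ref{con3} (``it is hard to prove since we do not know the cut time of every geodesic on step-two Carnot groups''). Your sketch for it --- transferring a witness $(p,A,s)$ of the failure of $\MCP(0,N)$ from $\G_{t_\infty}$ to nearby $\G_t$ --- needs far more than the uniform convergence of distances in Proposition \ref{t1}: to compare $\mu(Z_s^{t}(p,A))$ with $\mu(Z_s^{t_\infty}(p,A))$ you need the intermediate-point maps (equivalently, the exponential maps and their Jacobians) to converge with uniform nondegeneracy on $A$, which requires keeping $A$ uniformly away from the cut loci $\Cut_t(p)$ for all $t$ near $t_\infty$. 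No such uniform control of cut loci under Lie algebra convergence is available in the paper or its references; worse, near-optimal witnesses of the failure of $\MCP(0,N)$ may exactly be sets touching the cut locus or escaping to infinity, so your preliminary reduction to ``$A$ bounded and bounded away from $\Cut_{t_\infty}(p)$, at the cost of $N \to N+\eta$'' is itself an unproven claim. The proposal is therefore a (reasonable) reduction of the conjecture to another open statement, not a proof; the same goes for the ``alternative route,'' which is a proposal for a computation, not a computation.

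There are also two concrete defects in the setup. First, your endpoints are incompatible: Definition \ref{lac} requires every member of the family to share the \emph{same} stratification $\g = \g_1 \oplus \g_2$ with $[\g_1,\g_1]_t = \g_2$, so no admissible family can join a Heisenberg group ($\dim \g_2 = 1$) to the group of Theorem \ref{c1} ($\dim \g_1 = 4$, $\dim \g_2 = 3$); moreover a linear interpolation of two admissible brackets, while automatically satisfying Jacobi in step two, can lose the surjectivity $[\g_1,\g_1]_t = \g_2$ at intermediate $t$. The natural repair --- surely what the author intends --- is to use the paper's own family with $1/k$ replaced by a continuous parameter $t \in [0,1]$: taking $k=1$ in the displayed matrix $J_{u,\G_k}$ gives $J_{u,\G_1}^2 = -|u|^2\,\mathrm{Id}$, so the $t=1$ member is an H-type (quaternionic Heisenberg) group, for which \cite{BR18} yields the \emph{exact} value $N_{\CE} = 2Q-n = 13$, while the $t=0$ member (the paper's $\G_\infty$, built from $K_{1,3}$) satisfies $N_{\CE} \ge N_0 = 17$ by Theorem \ref{tnz24}. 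This repair also fixes your second defect: for the intermediate value theorem you need two \emph{attained} values that differ, and lower bounds $N_{\CE} \ge N_0$ at both endpoints (which is all that Theorem \ref{c1} provides) do not suffice --- you need an exact value or an upper bound at one end, and that is available only at the H-type end. Even with this corrected family, the argument still hinges on the unproven continuity, so the conjecture remains open.
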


\subsection{Structure of the paper} In Section \ref{sec2} we recall the lower bound $N_0$ introduced in \cite{NZ24} and the proof of Theorem \ref{c1} is provided in Section \ref{sec4}.

\section{The lower bound $N_0$}\label{sec2}

\medskip

Fix a step-two Carnot group $\G$. To give the explicit formula for $N_0$ given in \cite{NZ24} in the setting of step-two Carnot groups, we need to first extend the given inner product $\langle \cdot,\cdot \rangle$ on $\g_1$ to the whole Lie algebra $\g$ in such way that $\g_1$ and $\g_2$ are orthogonal. Although different extension will result in different linear maps and spaces (to be defined below), it turns out that the number $N_0$ is the same. Since we only focus on the number $N_0$ in this work so we can just fix one such inner product on $\g$. For the result of this subsection, we refer to \cite[\S~6.4]{NZ24}. 

For every $u \in \g_2$, we can define a linear map $J_u : \g_1 \to \g_1$ by the following formula:
\begin{align}\label{defJ}
\langle u, [v,w] \rangle = \langle J_u v, w \rangle, \qquad \forall \, v,w \in \g_1.
\end{align}

From definition $u \mapsto J_u$ is linear, continuous, and $J_u$ is skew-symmetric since
\[
\langle J_u v, w \rangle = \langle u, [v,w] \rangle = -\langle u, [w,v] \rangle = - \langle J_u w, v \rangle =  - \langle  v, J_u w\rangle, \quad \forall \, v,w \in \g_1.
\]
Furthermore, since $[\g_1,\g_1] = \g_2$, for $u \in \g_2 \setminus \{0\}$ we must have $J_u \ne 0$.

Now for fixed $\g \ni p = \xi + u$ with $\xi \in \g_1$ and $u \in \g_2$, we define
\begin{align}\label{spg1}
U^\ell(p) := \mathrm{span} \{\xi, J_u \xi, \dots, J_u^{\ell - 1} \xi \} \subset \g_1, \quad \forall \, \ell \in \N
\end{align}
with $U^0(p)$ defined to be $\{0\}$, and 
\begin{align}\label{spg2}
U_\ell(p) := \{v \in \g_2 : J_v (U^\ell(p)) = \{0\}\} \subset \g_2, \quad \forall \, \ell \in \N \cup \{0\}.
\end{align}
Note that $\{U^{\ell}(p)\}_{\ell \in \N \cup \{0\}}$ is increasing and $\{U_\ell(p)\}_{\ell \in \N \cup \{0\}}$ is decreasing. For $\ell \in \N \cup \{0\}$, let $W_\ell(p)$ be the orthogonal space of $U_{\ell+1}(p)$ in $U_\ell(p)$ and
\begin{align}\label{Wi}
	W_\infty(p) :=  \left\{v \in \g_2 : 	J_v(J_u^{\ell}\xi) = 0, \forall \, \ell \in \N	 \cup \{0\} \right\}.
\end{align}
By definition we have  the orthogonal splitting 
\[
\g_2 = \bigoplus_{\ell=0}^\infty W_\ell(p) \oplus W_\infty(p).
\]
Since $\g_2$ is finite dimensional, only finitely many $W_\ell(p)$'s are non-trivial. Finally define 
\begin{align}\label{eqnp}
N(p) := \begin{cases}
2Q - n + 2 \sum_{\ell = 0}^\infty \ell \, \dim (W_\ell (p)), \quad &\mbox{if $W_\infty(p) = \{0\}$}\\
\infty,  \quad &\mbox{if $W_\infty(p) \ne \{0\}$}
\end{cases}.
\end{align}
Recall that $Q= \dim \g_1 + 2 \dim \g_2 = m + 2 \dim \g_2$ and $n = \dim \g_1 + \dim \g_2 = m +  \dim \g_2$. The following theorem is the main result of \cite{NZ24} in the setting of step-two Carnot groups.

\begin{remark}
The number $N(p)$ defined here is exactly the number $\mathcal{N}_p$ in \cite[Definition 5.44]{ABR18} after the identification of $\g = T_e \G$ with $\g^* = T_e^* \G$ induced by the given inner product. For more details, we refer to \cite[Remark 1.4 and Remark 6.9]{NZ24}.
\end{remark}

\begin{theorem}[Theorem B and Theorem 6.7 of \cite{NZ24}]\label{tnz24}
In the setting of step-two Carnot groups, we have
\begin{align}\label{eqnce}
N_{\CE} \ge N_0 := \sup\{N(p) : p \in \g \mbox{ with } N(p) < \infty\}.
\end{align}
\end{theorem}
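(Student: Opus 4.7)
The plan is to establish, for each $p \in \g$ with $N(p) < \infty$, the inequality $N \ge N(p)$ whenever $\MCP(0,N)$ holds; taking the supremum over such $p$ then yields $N_{\CE} \ge N_0$ by Definition \ref{defce}. By left-invariance \eqref{leftd} of the Carnot--Carath\'eodory distance, it suffices to work at the base point $0 \in \g$. Fix $p = \xi + u$ with $\xi \in \g_1$, $u \in \g_2$, and $W_\infty(p) = \{0\}$. Using the inner product extended to $\g = \g_1 \oplus \g_2$, identify $p$ with an initial covector of a normal geodesic $\gamma$ starting at $0$; after a small perturbation if needed, assume the endpoint $q := \gamma(1)$ lies outside $\Cut(0)$, so that the intermediate-point map $\Psi_s : y \mapsto \Phi^0(y,s)$ is smooth on a neighborhood of $q$.

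The core step is to derive the small-$s$ asymptotics of the Jacobian $\J_s := |\det d\Psi_s(q)|$ with respect to Haar measure. On a step-two Carnot group the horizontal velocity of the normal geodesic satisfies $\dot\gamma_1(t) = e^{tJ_u}\xi$, and the vertical component is determined by \eqref{mulalg}. Factoring $\Psi_s$ through the sub-Riemannian endpoint map and using the dilation identity $\Psi_s \circ \exp = \exp \circ \,\delta_s^*$ (at the level of covectors, modulo the nonlinearity of $J_u$), the differential $d\Psi_s(q)$ decomposes into blocks indexed by the filtration $U^\ell(p) \subset \g_1$ and the dual filtration $U_\ell(p) \subset \g_2$ defined in \eqref{spg1}--\eqref{spg2}. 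Choosing bases adapted to the orthogonal splitting $\g_2 = \bigoplus_{\ell} W_\ell(p) \oplus W_\infty(p)$ and expanding $e^{sJ_u}$ and $\int_0^s e^{\tau J_u}d\tau$ in powers of $s$, the $W_\ell(p)$-block vanishes to order exactly $\ell$, contributing $\ell\,\dim W_\ell(p)$ to the total order of vanishing, while the horizontal directions and the untwisted vertical directions contribute the base exponent $2Q - n$ coming from the homogeneous dilation. The hypothesis $W_\infty(p) = \{0\}$ guarantees that no $W_\infty$-direction contributes an infinite order, so the leading term is nonzero and
\begin{equation*}
\J_s = c(p)\, s^{N(p)}\,(1 + o(1)), \qquad s \to 0^+,
\end{equation*}
with $c(p) > 0$ and exponent $N(p) = 2Q - n + 2\sum_\ell \ell\,\dim W_\ell(p)$ as in \eqref{eqnp}.

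The contradiction step is then short. Suppose $\MCP(0,N)$ holds for some $N < N(p)$, and apply it to a shrinking family $A_\varepsilon$ of small balls around $q$. Smoothness of $\Psi_s$ near $q$ gives
\begin{equation*}
\mu(Z_s(0, A_\varepsilon)) \le (1 + o_\varepsilon(1))\, \J_s\, \mu(A_\varepsilon),
\end{equation*}
which combined with the MCP lower bound $\mu(Z_s(0,A_\varepsilon)) \ge s^N \mu(A_\varepsilon)$ from Definition \ref{defce} yields $s^N \le (1 + o_\varepsilon(1))\, c(p)\, s^{N(p)}(1 + o(1))$ for all small $s, \varepsilon$. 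Letting $\varepsilon \to 0$ and then $s \to 0^+$ forces $s^{N - N(p)}$ to remain bounded, which is impossible for $N < N(p)$. Hence $N \ge N(p)$, and taking the supremum over admissible $p$ completes the proof.

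The main obstacle is the bookkeeping behind the Jacobian asymptotic in the middle paragraph: one must verify that the block structure of $d\Psi_s(q)$, written in a basis adapted to the filtrations $\{U^\ell(p)\}$ and $\{U_\ell(p)\}$, is upper-triangular at leading order with each diagonal block's order of vanishing determined precisely by the index $\ell$, with no hidden cancellations lowering the total exponent. This is essentially a careful linear-algebraic analysis of the matrix $\int_0^s e^{\tau J_u}d\tau$ restricted to invariant subspaces of $J_u$, and is what makes $N(p)$ take the specific form recorded in \eqref{eqnp}; the identification with $\mathcal N_p$ of \cite{ABR18} noted in the remark preceding Theorem \ref{tnz24} can be invoked to import the analogous asymptotic already established there.
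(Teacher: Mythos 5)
A preliminary remark on the comparison itself: the paper does not prove Theorem \ref{tnz24} at all --- it is imported verbatim from \cite{NZ24} (Theorem B and Theorem 6.7 there), and the surrounding text only describes its origin, namely the small-time asymptotics of the Jacobian of the sub-Riemannian exponential map together with the Jacobian-type characterization of $\MCP(0,N)$ from \cite[Proposition 2.5]{BR20}. Your reconstruction --- contract small balls $A_\varepsilon$ around the endpoint of the normal geodesic with covector $p$, compare the $\MCP$ lower bound $s^N\mu(A_\varepsilon)$ with the upper bound $\sup_{A_\varepsilon}\J_s\cdot\mu(A_\varepsilon)$, send $\varepsilon\to 0$ first and only then $s\to 0^+$ --- is the same mechanism, and your order of limits is the correct one: reversing it would fail, since for generic $y$ near $q$ the vanishing order of $\J_s(y)$ is only $2Q-n$, so the contradiction only appears after localizing exactly at $q$.

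There is, however, one step that as written is a genuine gap: ``after a small perturbation if needed, assume the endpoint $q=\gamma(1)$ lies outside $\Cut(0)$.'' The function $p\mapsto N(p)$ is violently discontinuous --- points with $N(p)>2Q-n$ are exceptional, which is the entire point of \cite{NZ24} and of the present paper --- so a generic small perturbation of $p$ collapses $N(p)$ to $2Q-n$, and your argument then only proves $N_{\CE}\ge 2Q-n$. The perturbation must be taken along the ray $\{tp : t>0\}$: since $J_{tu}=tJ_u$, the spaces $U^\ell(tp)$, $U_\ell(tp)$, $W_\ell(tp)$ in \eqref{spg1}--\eqref{spg2} coincide with those of $p$, hence $N(tp)=N(p)$, while by homogeneity of the Hamiltonian flow $\gamma_{tp}(1)=\gamma_p(t)$, which for small $t$ avoids $\Cut(0)$ because short segments of this geodesic are unique minimizers free of conjugate points (this uses ampleness, which is exactly the hypothesis $W_\infty(p)=\{0\}$, i.e.\ $N(p)<\infty$). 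With this replacement the argument closes. A secondary slip: in your block analysis you assert that each $W_\ell(p)$-block raises the vanishing order by $\ell\dim W_\ell(p)$, which is inconsistent with the exponent $2\sum_\ell \ell\dim W_\ell(p)$ in \eqref{eqnp} that you correctly quote at the end; each direction of $W_\ell(p)$ contributes an extra $2\ell$ on top of its base weight. This factor of $2$ is precisely the kind of bookkeeping your last paragraph flags as the main obstacle, and importing the asymptotic from \cite[Theorem C]{ABR18} via the identification of $N(p)$ with $\mathcal{N}_p$, as you propose, is the legitimate way around doing it by hand.
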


\begin{definition}[M\'etivier group]\label{defM}
We call a step-two Carnot group $\G$ a M\'etivier group if $J_v$ is invertible when $v \ne 0$.
\end{definition}

\begin{remark}\label{idealM}
This kind of groups is first introduced by M\'etivier in \cite{M80}.  In the setting of step-two Carnot groups, the condition of M\'etivier group is equivalent to requiring that there is no non-trivial abnormal length minimizing geodesics (cf. \cite[Proposition 3.6]{MM16} or \cite[Equation (3.2)]{LDMOPV16}). As a result, the notion of M\'etivier groups is equivalent to that of ideal Carnot groups. 
\end{remark}

By definition, it is easy to prove the following lemma.

\begin{lemma}\label{lemM}
If $\G$ is a M\'etivier group, then $N_0 = 2Q - n = \dim \g_1 + 3 \dim \g_2$.
\end{lemma}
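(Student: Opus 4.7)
The plan is to unpack the definition of $N(p)$ in the Métivier setting and show that the sum $\sum_\ell \ell \dim W_\ell(p)$ collapses to $0$ whenever $N(p)$ is finite, so that the supremum in \eqref{eqnce} equals the baseline constant $2Q - n$. The key observation is that invertibility of every $J_v$ with $v \neq 0$ trivializes both the ``bad'' subspace $W_\infty(p)$ and the filtration $\{U_\ell(p)\}_{\ell \geq 1}$.

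First I would write any $p \in \g$ as $p = \xi + u$ with $\xi \in \g_1$ and $u \in \g_2$, and analyze $W_\infty(p)$ from \eqref{Wi}. If $\xi = 0$, then $J_u^\ell \xi = 0$ for every $\ell$, so $W_\infty(p) = \g_2$ and $N(p) = \infty$, which means such $p$ is excluded from the supremum in \eqref{eqnce}. If $\xi \neq 0$, then $W_\infty(p)$ consists of those $v \in \g_2$ with $J_v \xi = 0$ (among other conditions), but the Métivier hypothesis forces $v = 0$; hence $W_\infty(p) = \{0\}$, $N(p)$ is finite, and the first branch of \eqref{eqnp} applies.

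Next, still assuming $\xi \neq 0$, I would unwind \eqref{spg1}--\eqref{spg2}. For every $\ell \geq 1$ the subspace $U^\ell(p)$ contains $\xi \neq 0$, so if $v \in U_\ell(p)$ then $J_v \xi = 0$, and the Métivier property gives $v = 0$. Thus $U_\ell(p) = \{0\}$ for all $\ell \geq 1$, while $U_0(p) = \g_2$ by convention (since $U^0(p) = \{0\}$). Taking orthogonal complements inside the chain $\g_2 = U_0(p) \supset U_1(p) \supset \cdots$ immediately yields $W_0(p) = \g_2$ and $W_\ell(p) = \{0\}$ for every $\ell \geq 1$.

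Plugging this into \eqref{eqnp} gives $N(p) = 2Q - n + 2 \cdot 0 = 2Q - n$ for every $p$ with $\xi \neq 0$, and such $p$ exist since $\g_1 \neq \{0\}$. Hence the supremum in \eqref{eqnce} is attained and equal to $2Q - n$. Finally I would record the arithmetic identity $2Q - n = 2(m + 2\dim \g_2) - (m + \dim \g_2) = \dim \g_1 + 3 \dim \g_2$ to match the stated formula. The argument is essentially a direct unpacking of definitions once the Métivier invertibility is used; I do not anticipate any real obstacle, only the need to be careful about the degenerate case $\xi = 0$, which is handled by noting that those points contribute $N(p) = \infty$ and are therefore excluded from the supremum.
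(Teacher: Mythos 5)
Your proposal is correct and follows essentially the same route as the paper's proof: use the M\'etivier invertibility to show $U_\ell(p)=\{0\}$ for $\ell\ge 1$ and $W_\infty(p)=\{0\}$ whenever $\xi\ne 0$, so that only $W_0(p)=\g_2$ survives, the sum in \eqref{eqnp} vanishes, and the points with $\xi=0$ are discarded because $N(p)=\infty$. No gaps; the argument matches the paper's.
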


\begin{proof}
From definition we know that if $U^{\ell}(p) \ne \{0\}$, then $U_{\ell}(p) = \{0\}$. For fixed $\g \ni p = \xi + u$ with $\xi \in \g_1$ and $u \in \g_2$ we know that $U^{\ell}(p) = \{0\}$ if and only if $\ell = 0$, or $\xi = 0$. As a result, we have 
\[
W_\infty(p) = \begin{cases}
\g_2, \quad &\mbox{if $\xi = 0$} \\
\{0\}, \quad &\mbox{if $\xi \ne 0$}
\end{cases},
\]
and if $\xi \ne 0$,
\[
W_\ell(p) = \begin{cases}
\g_2, \quad &\mbox{if $\ell = 0$} \\
\{0\}, \quad &\mbox{if $\ell \in \N$}
\end{cases}.
\]

From the \eqref{eqnp} above, we obtain
\[
N(p) = \begin{cases}
\infty, \quad &\mbox{if $\xi = 0$} \\
2Q - n, \quad &\mbox{if $\xi \ne 0$}
\end{cases},
\]
and thus $N_0 = 2Q - n$ by \eqref{eqnce}.
\end{proof}

\section{Proof of Theorem \ref{c1}}\label{sec4}

\begin{proof}

On a vector space $\g = \g_1 \oplus \g_2$ with an inner product $\langle \cdot, \cdot \rangle$ w.r.t. which $\g_1$ is orthogonal to $\g_2$, we choose an orthonormal basis of $\g_1$ and $\g_2$ respectively by
\[
\{\X_0, \X_1, \X_2, \X_3\}\quad \mbox{and} \quad \{\Y_1, \Y_2, \Y_3\}.
\]
Now we consider the following sequence of Lie algebra structure on $\g$ defined by the following relations:
\begin{align*}
[\X_0, \X_1]_k = \Y_1, \quad  &[\X_0, \X_2]_k = \Y_2, \quad [\X_0, \X_3]_k = \Y_3, \\
[\X_1, \X_2]_k = \frac{1}{k}\Y_3, \quad &[\X_1, \X_3]_k = -\frac{1}{k}\Y_2, \quad  [\X_2, \X_3]_k = \frac{1}{k}\Y_1,
\end{align*}
where $k \in \ol{\N}$ and $\frac{1}{\infty}$ is interpreted as $0$. In the following for fixed $k \in \ol{\N}$ we use $N_{0,\G_k}$, $N_{\CE, \G_k}$ and $J_{u,\G_k}$ to denote the lower bound given in \eqref{eqnce}, the curvature exponent and the operator defined by \eqref{defJ} on $\G_k = (\g,\ast_k)$ respectively. Note that when $k = \infty$, it is exactly the step-two Carnot group induced by the star graph $K_{1,3}$ in \cite[\S~7.3]{NZ24} on which $N_{0,\G_\infty} = 17$.  It follows from the construction and Remark \ref{sc} that $\{(\g,[\cdot,\cdot]_k)\}_{k \in \N}$ converges to $(\g,[\cdot,\cdot]_\infty)$ as $k \to \infty$. Now let $\g_2 \ni u = \sum_{\ell = 1}^3 u_\ell \Y_\ell$, under the basis $\{\X_0, \X_1, \X_2, \X_3\}$, we have 
\[
J_{u,\G_k} = \begin{pmatrix}
0 & - u_1 & - u_2 & - u_3 \\
u_1 & 0 & -\frac{1}{k} u_3 & \frac{1}{k} u_2 \\
u_2 & \frac{1}{k} u_3 & 0 & -\frac{1}{k} u_1 \\
u_3 & -\frac{1}{k} u_2 & \frac{1}{k} u_1 & 0
\end{pmatrix}.
\]
By taking square $J_{u,\G_k}^2$ we know that $J_{u,\G_k}$ is invertible if $u \ne 0$ and $k \in \N$, which means  $\G_k (k \in \N)$ is M\'etivier, or  equivalently ideal by Remark \ref{idealM}. As a result, we know that $N_{0,\G_k} = 13$ for $k \in \N$ by Lemma \ref{lemM}. Assume on the contrary that $N_{\CE} = N_0$ holds for all ideal Carnot groups. Then we know $N_{\CE, \G_k} = N_{0,\G_k} = 13$ for $k \in \N$. It follows from Proposition \ref{t1} that
\[
17 = N_{0,\G_\infty}  \le N_{\CE, \G_\infty} \le\liminf_{k \to \infty} N_{\CE,\G_k} = \liminf_{k \to \infty} N_{0,\G_k} = 13,
\]
which leads to a contradiction. We complete the proof. In particular, we have showed that $N_{0}$ is not lower semicontinuous w.r.t. the Lie algebra convergence.

\end{proof}

\section*{Acknowledgements}

\medskip

The author would like to thank the anonymous referee for many valuable suggestions and remarks, especially the reference to the paper \cite{ALN23}, which reduces the length of the proof significantly. The author would also like to thank Kenshiro Tashiro for fruitful discussions.


\begin{thebibliography}{10}

\bibitem{ABR18}
A.~Agrachev, D.~Barilari, and L.~Rizzi, \emph{Curvature: a variational
  approach}, Mem. Amer. Math. Soc. \textbf{256} (2018), no.~1225, v+142.
  \MR{3852258}

\bibitem{ABB20}
Andrei Agrachev, Davide Barilari, and Ugo Boscain, \emph{A comprehensive
  introduction to sub-{R}iemannian geometry}, Cambridge Studies in Advanced
  Mathematics, vol. 181, Cambridge University Press, Cambridge, 2020, From the
  Hamiltonian viewpoint, With an appendix by Igor Zelenko. \MR{3971262}

\bibitem{ALN23}
Gioacchino Antonelli, Enrico Le~Donne, and Sebastiano Nicolussi~Golo,
  \emph{Lipschitz {C}arnot-{C}arath\'eodory structures and their limits}, J.
  Dyn. Control Syst. \textbf{29} (2023), no.~3, 805--854. \MR{4645068}

\bibitem{BR20}
Zeinab Badreddine and Ludovic Rifford, \emph{Measure contraction properties for
  two-step analytic sub-{R}iemannian structures and {L}ipschitz {C}arnot
  groups}, Ann. Inst. Fourier (Grenoble) \textbf{70} (2020), no.~6, 2303--2330.
  \MR{4245620}

\bibitem{BBG12}
Davide Barilari, Ugo Boscain, and Jean-Paul Gauthier, \emph{On 2-step, corank
  2, nilpotent sub-{R}iemannian metrics}, SIAM J. Control Optim. \textbf{50}
  (2012), no.~1, 559--582. \MR{2888278}

\bibitem{BR18}
Davide Barilari and Luca Rizzi, \emph{Sharp measure contraction property for
  generalized {H}-type {C}arnot groups}, Commun. Contemp. Math. \textbf{20}
  (2018), no.~6, 1750081, 24. \MR{3848070}

\bibitem{BBG14}
Fabrice Baudoin, Michel Bonnefont, and Nicola Garofalo, \emph{A
  sub-{R}iemannian curvature-dimension inequality, volume doubling property and
  the {P}oincar\'e{} inequality}, Math. Ann. \textbf{358} (2014), no.~3-4,
  833--860. \MR{3175142}

\bibitem{BG17}
Fabrice Baudoin and Nicola Garofalo, \emph{Curvature-dimension inequalities and
  {R}icci lower bounds for sub-{R}iemannian manifolds with transverse
  symmetries}, J. Eur. Math. Soc. (JEMS) \textbf{19} (2017), no.~1, 151--219.
  \MR{3584561}

\bibitem{BLU07}
A.~Bonfiglioli, E.~Lanconelli, and F.~Uguzzoni, \emph{Stratified {L}ie groups
  and potential theory for their sub-{L}aplacians}, Springer Monographs in
  Mathematics, Springer, Berlin, 2007. \MR{2363343}

\bibitem{BMRT242}
Samu{\"e}l {Borza}, Mattia {Magnabosco}, Tommaso {Rossi}, and Kenshiro
  {Tashiro}, \emph{{Measure contraction property and curvature-dimension
  condition on sub-Finsler Heisenberg groups}}, arXiv e-prints (2024),
  arXiv:2402.14779.

\bibitem{BMRT24}
\bysame, \emph{{The curvature exponent of sub-Finsler Heisenberg groups}},
  arXiv e-prints (2024), arXiv:2407.14619.

\bibitem{BT23}
Samu{\"e}l {Borza} and Kenshiro {Tashiro}, \emph{{Measure contraction property,
  curvature exponent and geodesic dimension of sub-Finsler Heisenberg groups}},
  arXiv e-prints (2023), arXiv:2305.16722.

\bibitem{GMS15}
Nicola Gigli, Andrea Mondino, and Giuseppe Savar\'e, \emph{Convergence of
  pointed non-compact metric measure spaces and stability of {R}icci curvature
  bounds and heat flows}, Proc. Lond. Math. Soc. (3) \textbf{111} (2015),
  no.~5, 1071--1129. \MR{3477230}

\bibitem{J09}
Nicolas Juillet, \emph{Geometric inequalities and generalized {R}icci bounds in
  the {H}eisenberg group}, Int. Math. Res. Not. IMRN (2009), no.~13,
  2347--2373. \MR{2520783}

\bibitem{J21}
\bysame, \emph{Sub-{R}iemannian structures do not satisfy {R}iemannian
  {B}runn-{M}inkowski inequalities}, Rev. Mat. Iberoam. \textbf{37} (2021),
  no.~1, 177--188. \MR{4201410}

\bibitem{LDMOPV16}
Enrico Le~Donne, Richard Montgomery, Alessandro Ottazzi, Pierre Pansu, and
  Davide Vittone, \emph{Sard property for the endpoint map on some {C}arnot
  groups}, Ann. Inst. H. Poincar\'{e} Anal. Non Lin\'{e}aire \textbf{33}
  (2016), no.~6, 1639--1666. \MR{3569245}

\bibitem{MR23}
Mattia Magnabosco and Tommaso Rossi, \emph{Almost-{R}iemannian manifolds do not
  satisfy the curvature-dimension condition}, Calc. Var. Partial Differential
  Equations \textbf{62} (2023), no.~4, Paper No. 123, 27. \MR{4562156}

\bibitem{M80}
Guy M\'{e}tivier, \emph{Hypoellipticit\'{e} analytique sur des groupes
  nilpotents de rang {$2$}}, Duke Math. J. \textbf{47} (1980), no.~1, 195--221.
  \MR{563376}

\bibitem{MM16}
Annamaria Montanari and Daniele Morbidelli, \emph{On the lack of semiconcavity
  of the sub{R}iemannian distance in a class of {C}arnot groups}, J. Math.
  Anal. Appl. \textbf{444} (2016), no.~2, 1652--1674. \MR{3535781}

\bibitem{MM17}
\bysame, \emph{On the sub{R}iemannian cut locus in a model of free two-step
  {C}arnot group}, Calc. Var. Partial Differential Equations \textbf{56}
  (2017), no.~2, Paper No. 36, 26. \MR{3624932}

\bibitem{MM23}
\bysame, \emph{Sub-{R}iemannian cut time and cut locus in {R}eiter-{H}eisenberg
  groups}, ESAIM Control Optim. Calc. Var. \textbf{30} (2024), Paper No. 72,
  24. \MR{4805752}

\bibitem{MM25}
Annamaria {Montanari} and Daniele {Morbidelli}, \emph{{New properties of
  length-extremals in free step-2 rank-4 Carnot groups}}, arXiv e-prints
  (2025), arXiv:2503.00021.

\bibitem{M02}
Richard Montgomery, \emph{A tour of subriemannian geometries, their geodesics
  and applications}, Mathematical Surveys and Monographs, vol.~91, American
  Mathematical Society, Providence, RI, 2002. \MR{1867362}

\bibitem{NZ24}
Sebastiano {Nicolussi Golo} and Ye~Zhang, \emph{Curvature exponent and geodesic
  dimension on sard-regular carnot groups}, Analysis and Geometry in Metric
  Spaces \textbf{12} (2024), no.~1, 20240004.

\bibitem{O07}
Shin-ichi Ohta, \emph{On the measure contraction property of metric measure
  spaces}, Comment. Math. Helv. \textbf{82} (2007), no.~4, 805--828.
  \MR{2341840}

\bibitem{R13}
Ludovic Rifford, \emph{Ricci curvatures in {C}arnot groups}, Math. Control
  Relat. Fields \textbf{3} (2013), no.~4, 467--487. \MR{3110060}

\bibitem{R16}
Luca Rizzi, \emph{Measure contraction properties of {C}arnot groups}, Calc.
  Var. Partial Differential Equations \textbf{55} (2016), no.~3, Art. 60, 20.
  \MR{3502622}

\bibitem{RS17}
Luca Rizzi and Ulysse Serres, \emph{On the cut locus of free, step two {C}arnot
  groups}, Proc. Amer. Math. Soc. \textbf{145} (2017), no.~12, 5341--5357.
  \MR{3717961}

\bibitem{RS23}
Luca Rizzi and Giorgio Stefani, \emph{Failure of curvature-dimension conditions
  on sub-{R}iemannian manifolds via tangent isometries}, J. Funct. Anal.
  \textbf{285} (2023), no.~9, Paper No. 110099, 31. \MR{4623954}

\bibitem{S06}
Karl-Theodor Sturm, \emph{On the geometry of metric measure spaces. {II}}, Acta
  Math. \textbf{196} (2006), no.~1, 133--177. \MR{2237207}

\end{thebibliography}
\end{document}